\newtheorem{theorem}{Theorem}[section]
\newtheorem{proposition}[theorem]{Proposition}
\newtheorem{corollary}[theorem]{Corollary}
\theoremstyle{definition}
\newtheorem{example}[theorem]{Example}
\theoremstyle{remark}
\newtheorem*{remark}{Remark}
\numberwithin{equation}{section}
\renewcommand{\phi}{\varphi}
\newcommand{\bl}{\mbox{$\lambda\kern-0.53em\lambda$}}
\newcommand{\bmu}{\mbox{$\mu\kern-0.55em\mu$}}
\newcommand{\bnu}{\mbox{$\nu\kern-0.51em\nu$}}
\def\bphi{\mbox{$\varphi\kern-0.59em\varphi$}}
\def\R{\mathbb R}
\def\Z{\mathbb Z}
\def\sK{\mathcal K}
\newcommand{\mb}[1]{{\textbf {\textit#1}}}
\renewcommand{\ge}{\geqslant}
\renewcommand{\geq}{\geqslant}
\renewcommand{\leq}{\leqslant}
\def\лк{\symbol{"BE}}
\def\пк{\symbol{"BF}}
\newcommand{\rank}{\mathop{\mathrm{rank}}}
\def\raag{\mbox{\it RA\/}}
\def\racg{\mbox{\it RC\/}}
\def\FL{\mbox{\it FL\/}}
\newcommand{\rk}{\mathcal R_{\mathcal K}}
\def\pt{\mathit{pt}}
\begin{document}


\title[The Lie algebra associated with a right-angled Coxeter group]{The Lie algebra associated with the lower central series of a right-angled Coxeter group}
\author{Yakov Veryovkin}

\address{Lomonosov Moscow State University,
Faculty of Mechanics and Mathematics}
\email{verevkin\_j.a@mail.ru}

\thanks{The research is supported by the Russian Foundation for Basic Research (grants no. 16-51-55017, 17-01-00671).}

\begin{abstract}
We study the lower central series of a right-angled Coxeter group~$\racg_\sK$ and the associated Lie algebra $L(\racg_\sK)$. The latter is related to the graph Lie algebra $L_\sK$. We give an explicit combinatorial description of the first three consecutive factors of the lower central series of the group $\racg_\sK$.
\end{abstract}

\dedicatory{Dedicated to Victor Matveevich Buchstaber on the occasion of his 75th anniversary}

\maketitle

\section{Introduction}
A right-angled Coxeter group $\racg_\sK$ is a group with $m$
generators $g_1,\ldots,g_m$, satisfying the relations $g_i^2=1$ for all $i \in \{1,\ldots,m\}$ and the commutator relations $g_ig_j=g_jg_i$ for some pairs $\{i,j\}$. Such a group is determined by a graph $\sK^1$ with $m$ vertices, where two vertices are connected by an edge if the corresponding generators commute. Right-angled Coxeter groups are classical objects in geometric group theory. In this paper we study the lower central series of a right-angled Coxeter group $\racg_\sK$ and the associated graded Lie algebra $L(\racg_\sK)$.

Right-angled Artin groups $\raag_\sK$ differ from right-angled Coxeter groups $\racg_\sK$ by the absence of relations $g_i^2 = 1$. The associated Lie algebra $L(\raag_\sK)$ was fully described in~\cite{Duch-Krob}, see also~\cite{WaDe},~\cite{Papa-Suci}. Namely, it was proved that the Lie algebra $L(\raag_\sK)$ is a isomorphic to the graph Lie algebra (over $\mathbb Z$) corresponding to the graph $\sK^1$.

For right-angled Coxeter groups, the quotient groups $\gamma_1(\racg_\sK) / \gamma_n(\racg_\sK)$ were described for some particular $\sK$ and $n$ in \cite{Struik1},~\cite{Struik2}. For $n \ge 4$ difficulties arose, similar to those we encountered in the calculation of the successive quotients $\gamma_{n-1}(\racg_\sK) / \gamma_n(\racg_\sK)$. In contrast to the case of right-angled Artin groups, the problem of describing the associated Lie algebra $L(\racg_\sK)$ is much harder due to the lack of an  isomorphism between the algebra $L(\racg_\sK)$ and the graph Lie algebra $L_\sK$ over $\mathbb Z_2$ (see Section~\ref{gradcomprc}). In this paper we construct an epimorphism of Lie algebras $L_\sK \rightarrow L(\racg_\sK)$ and in some cases describe its kernel (see Propositions~\ref{liz2}--\ref{rc2point}). For an arbitrary group $\racg_\sK$, we give a combinatorial description of the bases for the first three graded components of the associated Lie algebra $L(\racg_\sK)$ (see Theorem~\ref{LRCK}).

I express my gratitude to my supervisor Taras Evgenievich Panov for suggesting the problem, help and advice.

\section{Preliminaries}
Let $\sK$ be an (abstract) simplicial complex on the set $ [m] = \{1,2, \dots, m \}$. 
A subset $I=\{i_1,\ldots,i_k\}\in\mathcal K$ is called \emph{a simplex} (or \emph{face}) of~$\sK$. We always assume that $\sK$ contains $\varnothing$ and all singletons $\{i\}$, $i = 1, \ldots, m$.

We denote by $F_m$ or $F(g_1, \ldots, g_m)$ a free group of rank $m$ with generators $g_1, \ldots, g_m$.

The \emph{right-angled Coxeter (Artin) group} $\racg_\sK$ ($\raag_\sK$) corresponding to~$\sK$ is defined by generators and relations as follows:
\[
  \racg_\sK = F(g_1,\ldots,g_m)\big/ (g_i^2 = 1 \text{ for } i \in \{1, \ldots, m\}, \; \; g_ig_j=g_jg_i\text{ when
  }\{i,j\}\in\sK),
\]
\[
  \raag_\sK = F(g_1,\ldots,g_m)\big/ (g_ig_j=g_jg_i\text{ when
  }\{i,j\}\in\sK).
\]

Clearly, the group $\racg_\sK$ ($\raag_\sK$) depends only on the $1$-skeleton of~$\sK$, the graph~$\sK^1$.

We recall the construction of polyhedral products.

Let $\sK$ be a simplicial complex on~$[m]$, and let
\[
  (\mb X,\mb A)=\{(X_1,A_1),\ldots,(X_m,A_m)\}
\]
be a sequence of $m$ pairs of pointed topological spaces, $\pt\in A_i\subset X_i$, where $\pt$ denotes the basepoint. For each subset $I\subset[m]$ we set
\begin{equation}\label{XAI}
  (\mb X,\mb A)^I=\bigl\{(x_1,\ldots,x_m)\in
  \prod_{k=1}^m X_k\colon\; x_k\in A_k\quad\text{for }k\notin I\bigl\}
\end{equation}
and define the \emph{polyhedral product} $(\mb X,\mb
A)^\sK$ as
\[
  (\mb X,\mb A)^{\sK}=\bigcup_{I\in\mathcal K}(\mb X,\mb A)^I=
  \bigcup_{I\in\mathcal K}
  \Bigl(\prod_{i\in I}X_i\times\prod_{i\notin I}A_i\Bigl),
\]
where the union is taken inside the Cartesian product $\prod_{k=1}^m X_k$.

In the case when all pairs $(X_i,A_i)$ are the same, i.\,e.
$X_i=X$ and $A_i=A$ for $i=1,\ldots,m$, we use the notation
$(X,A)^\sK$ for $(\mb X,\mb A)^\sK$. Also, if each $A_i=\pt$, then
we use the abbreviated notation $\mb X^\sK$ for $(\mb X,\pt)^\sK$,
and $X^\sK$ for $(X,\pt)^\sK$.

For details on this construction and examples see~\cite[\S3.5]{bu-pa00},~\cite{b-b-c-g10},~\cite[\S4.3]{bu-pa15}.

Let $(X_i,A_i)=(D^1,S^0)$ for $i = 1, \ldots, m$, where $D^1$ is a segment and $S^0$ is its boundary consisting of two points. The corresponding polyhedral product is known as the \emph{real moment-angle complex}~\cite[\S3.5]{bu-pa00},~\cite{bu-pa15} and is
denoted by~$\rk$:
\begin{equation}\label{rk}
  \rk=(D^1,S^0)^\sK=\bigcup_{I\in\sK}(D^1,S^0)^I.
\end{equation}

We shall also need the polyhedral product $(\R P^\infty)^\sK$, where $\R P^\infty$ the infinite-dimensional real projective space.

A simplicial complex $\sK$ is called a \emph{flag complex} if any set of vertices of $\sK$ which are pairwise connected
by edges spans a simplex. Any flag complex $\sK$ is determined by its one-dimensional skeleton $\sK^1$.

The relationship between polyhedral products and right-angled Coxeter groups is described by the following result.

\begin{theorem}[{see \cite[Corollary~3.4]{pa-ve}}]\label{coxfund}
Let $\sK$ be a simplicial complex on $m$ vertices.
\begin{itemize}
\item[(a)] $\pi_1((\R P^\infty)^\sK)\cong\racg_\sK$.
\item[(b)] Each of the spaces $(\R P^\infty)^\sK$ и $\rk$ is aspherical if and only if $\sK$ is a flag complex.
\item[(c)] $\pi_i((\R P^\infty)^\sK)\cong\pi_i(\rk)$ for $i\ge2$.
\item[(d)] The group $\pi_1(\rk)$ isomorphic to the commutator subgroup~$\racg'_\sK$.
\end{itemize}
\end{theorem}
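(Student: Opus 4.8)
The plan is to handle (a) and (b) by direct analysis of the two polyhedral products and then to read off (c) and (d) from a single fibration linking them. For (a) I would compute $\pi_1$ from the $2$-skeleton. Equipping $\R P^\infty=K(\Z_2,1)$ with its standard CW structure (one cell per dimension), the polyhedral product $(\R P^\infty)^\sK$ acquires a CW structure whose $2$-skeleton involves only the faces $I\in\sK$ with $|I|\le 2$, i.e. the graph $\sK^1$: each factor contributes a loop $g_i$ together with a $2$-cell imposing $g_i^2=1$, while each edge $\{i,j\}\in\sK$ contributes a cell of $\R P^1\times\R P^1$ imposing $[g_i,g_j]=1$. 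Van Kampen's theorem then returns precisely the presentation of $\racg_\sK$, and higher faces add only cells of dimension $\ge 3$, so $\pi_1$ depends on $\sK^1$ alone.

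The geometric core of (c) and (d) is one fibration. The group $(\Z_2)^m$ acts on $\rk=(D^1,S^0)^\sK$ coordinatewise, reflecting each $D^1$ and preserving $S^0$, and I would identify the Borel construction $E(\Z_2)^m\times_{(\Z_2)^m}\rk$ with $(\R P^\infty)^\sK$: applying the homotopy quotient factorwise sends the pair $(D^1,S^0)$ to $\bigl(S^\infty\times_{\Z_2}D^1,\,S^\infty\times_{\Z_2}S^0\bigr)\simeq(\R P^\infty,\pt)$, since $D^1$ is equivariantly contractible while $\Z_2$ acts freely and transitively on $S^0$. This yields the fibration
\[
  \rk\longrightarrow(\R P^\infty)^\sK\longrightarrow(\R P^\infty)^m=B(\Z_2)^m .
\]
As $\pi_i((\R P^\infty)^m)=0$ for $i\ge 2$, the long exact homotopy sequence gives the isomorphisms of (c) at once, and its tail is a short exact sequence $1\to\pi_1(\rk)\to\racg_\sK\to(\Z_2)^m\to 1$ (injectivity on the left since $\pi_2((\R P^\infty)^m)=0$). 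The surjection $\racg_\sK\to(\Z_2)^m$ is the abelianization, whose kernel is the commutator subgroup $\racg'_\sK$, which proves (d).

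Part (b) is where I expect the real work, and it is the step that couples asphericity to the flag condition. I would use the cubical structure on $\rk\subset(D^1)^m$, whose cubes are indexed by a face $I\in\sK$ together with a sign vector on $[m]\setminus I$; one checks that the link of every vertex is canonically isomorphic to $\sK$. By Gromov's lemma (a cube complex is nonpositively curved exactly when every vertex link is flag) the universal cover is then CAT(0) precisely when $\sK$ is flag, in which case it is contractible and $\rk$ is aspherical; by (c) the same follows for $(\R P^\infty)^\sK$. For the converse I would invoke functoriality of polyhedral products: if $\sK$ is not flag, a minimal non-face $J$ (necessarily $|J|\ge 3$) has full subcomplex $\sK_J=\partial\Delta^{|J|-1}$, and the coordinate retraction $\rk\to\mathcal R_{\sK_J}=S^{|J|-1}$ shows that $\pi_{|J|-1}(\rk)$ surjects onto $\pi_{|J|-1}(S^{|J|-1})=\Z\ne 0$ with $|J|-1\ge 2$, so $\rk$ is not aspherical. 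Verifying the Borel identification of the fibration and the vertex-link computation are the two steps I expect to demand the most care.
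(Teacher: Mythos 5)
Your proposal is correct, but there is nothing internal to compare it with: the paper does not prove Theorem~\ref{coxfund} at all, it imports the statement verbatim from \cite[Corollary~3.4]{pa-ve}. Measured against that cited literature, your argument is essentially the standard one. Part (a) via the $2$-skeleton of the product cell structure is the usual van Kampen computation for polyhedral products of classifying spaces (and your observation that faces $I$ with $|I|\ge 3$ contribute no cells of dimension $\le 2$ beyond those of $\sK^1$ is exactly why only the graph matters). The Borel-construction fibration $\rk\to(\R P^\infty)^\sK\to(\R P^\infty)^m$ is precisely how (c) and (d) are obtained in the source: the projection induces on $\pi_1$ the map $g_i\mapsto e_i$, which factors through $\racg_\sK^{\mathrm{ab}}\cong\Z_2^m$ isomorphically, so its kernel is $\racg'_\sK$. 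Part (b) --- Gromov's flagness criterion applied to the cube complex $\rk$, all of whose vertex links are isomorphic to $\sK$, together with the retraction of $\rk$ onto $\mathcal R_{\sK_J}\cong S^{|J|-1}$ for a suitable non-face $J$ --- is Davis's classical argument. Two points deserve the care you already flagged, plus one you did not. First, the identification of $E(\Z_2)^m\times_{(\Z_2)^m}\rk$ with $(\R P^\infty)^\sK$ is not a homeomorphism: $S^\infty\times_{\Z_2}S^0\cong S^\infty$ is contractible but is not a point, so one needs the homotopy invariance of the polyhedral product functor for componentwise homotopy equivalences of CW pairs (this is in~\cite{b-b-c-g10}) applied to $(S^\infty\times_{\Z_2}D^1,\,S^\infty\times_{\Z_2}S^0)\to(\R P^\infty,\pt)$. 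Second, in the converse of (b) your parenthetical ``a minimal non-face $J$ (necessarily $|J|\ge3$)'' is stated backwards: a non-flag complex may well have minimal non-faces of cardinality $2$ (missing edges), which are useless here; what non-flagness actually guarantees is the \emph{existence} of a set of pairwise adjacent vertices not spanning a face, and a minimal such set $J$ has $|J|\ge3$ with $\sK_J=\partial\Delta^{|J|-1}$. With these wording repairs, the proof goes through as you describe.
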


For each subset~$J\subset[m]$, consider the restriction of $\sK$ to~$J$:
\[
  \sK_J=\{I\in\sK\colon I\subset J\},
\]
which is also known as a \emph{full subcomplex} of~$\sK$.

The following theorem gives a combinatorial description of homology of the real moment-angle complex $\mathcal R_\sK$:

\begin{theorem}[{\cite{bu-pa00}, \cite[\S4.5]{bu-pa15}}]\label{homrk} There is an isomorphism
\[
  H_k(\rk;\Z)\cong\bigoplus_{J\subset[m]}\widetilde
  H_{k-1}(\sK_J)
\]
for any $k \ge 0$, where $\widetilde H_{k-1}(\sK_J)$~is the reduced simplicial homology group of ~$\sK_J$.
\end{theorem}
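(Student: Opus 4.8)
The plan is to compute $H_*(\rk)$ directly from the canonical CW structure that $\rk$ inherits from the product cell decomposition of $(D^1)^m$, using a change of basis in the cellular chain complex that splits it as a direct sum indexed by the subsets $J\subset[m]$. First I would record the cells. Give $D^1$ the CW structure with two $0$-cells (the points of $S^0$) and one $1$-cell; then the cells of $(D^1)^m$ are products, and such a cell lies in $\rk=(D^1,S^0)^\sK$ exactly when the set $I$ of coordinates carrying the $1$-cell is a simplex of $\sK$. Thus the cellular chain complex is the subcomplex $C_*(\rk)\subset C_*(D^1)^{\otimes m}$ spanned by those tensor monomials whose ``$1$-cell support'' lies in $\sK$. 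Writing $C_*(D^1)=\langle a_0,a_1\rangle\oplus\langle b\rangle$ with $\partial b=a_1-a_0$, I would pass to the new per-coordinate basis $u=a_0$, $v=a_1-a_0$, $w=b$, so that $\partial u=\partial v=0$ and $\partial w=v$. Since this change of basis is degree-preserving and keeps $w=b$, the subspace $C_*(\rk)$ is unchanged, and in the new basis it has a basis consisting of monomials $\bigotimes_k c_k$, $c_k\in\{u,v,w\}$, whose $w$-support is a simplex of $\sK$.

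The key observation is then that the Koszul-type differential only replaces a factor $w$ by $v$, and never touches, creates, or destroys a factor $u$. Hence the set $U=\{k:c_k=u\}$ of coordinates carrying $u$ is preserved by $\partial$, and $C_*(\rk)$ splits as a direct sum of subcomplexes $C_*^{(U)}$ over all $U\subset[m]$. Setting $J=[m]\setminus U$, a monomial in $C_*^{(U)}$ carries $u$ on $U$ and $v$ or $w$ on $J$, with $w$-support a simplex $\sigma\in\sK_J$ sitting in chain degree $|\sigma|$; the differential sends the generator of $\sigma$ to the signed sum of the generators of its facets $\sigma\setminus\{i\}$. This identifies $C_*^{(U)}$ with the augmented simplicial chain complex of the full subcomplex $\sK_J$ shifted up by one, whence $H_k\bigl(C_*^{(U)}\bigr)\cong\widetilde H_{k-1}(\sK_J)$, the monomial with empty $w$-support (the $\sigma=\varnothing$ term) accounting for the reduced rather than unreduced homology. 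Summing over $J=[m]\setminus U$ then yields the claimed isomorphism $H_k(\rk;\Z)\cong\bigoplus_{J\subset[m]}\widetilde H_{k-1}(\sK_J)$.

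The only conceptual step, and the crux of the argument, is the change of basis isolating the ``basepoint'' generator $u=a_0$: it is precisely the invariance of the support $U$ under $\partial$ that produces the splitting and, with it, the appearance of the full subcomplexes $\sK_J$. The remaining work is routine Koszul sign bookkeeping, arranging the signs in the tensor-product differential so that $C_*^{(U)}$ matches the simplicial boundary of $\sK_J$ exactly; this is standard and I would not dwell on it. An alternative route would be to invoke the stable (Bahri--Bendersky--Cohen--Gitler) splitting of the polyhedral product and identify the resulting smash summands with $\Sigma|\sK_J|$, but the cellular computation above is self-contained and, as a bonus, produces the grading of the decomposition explicitly.
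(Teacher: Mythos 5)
Your proof is correct: the change of basis $u=a_0$, $v=a_1-a_0$, $w=b$ makes the $u$-support invariant under the differential, splitting the cellular chain complex of $\rk$ into the shifted augmented simplicial chain complexes of the full subcomplexes $\sK_J$, which is exactly the claimed isomorphism. Note that the paper itself gives no proof of this theorem---it is quoted from \cite{bu-pa00} and \cite[\S4.5]{bu-pa15}---and your cellular/Koszul splitting argument is essentially the standard proof found in those references, so the two approaches coincide.
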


If $\sK$ is a flag complex, then Theorem~\ref{homrk} also gives a description of the integer homology groups of the commutator subgroup $\racg_\sK'$.

\medskip
Let $G$ be group. The \emph{commutator} of two elements $a, b \in G$ given by the formula $(a,b) = a^{-1}b^{-1}ab$.

We refer to the following nested commutator of length $k$
$$
(q_{i_1}, q_{i_2}, \ldots, q_{i_k}) := (\ldots((q_{i_1}, q_{i_2}), q_{i_3}), \ldots, q_{i_k}).
$$
as the \emph{simple nested commutator} of $q_{i_1}, q_{i_2}, \ldots, q_{i_k}$.

Similarly, we define \emph{simple nested Lie commutators}
$$
[\mu_{i_1}, \mu_{i_2}, \ldots, \mu_{i_k}] := [\ldots[[\mu_{i_1}, \mu_{i_2}], \mu_{i_3}], \ldots, \mu_{i_k}].
$$

For any group $G$ and any three elements $a, b, c \in G$, the following \emph{Hall--Witt identities} hold:
\begin{equation}\label{WH}
\begin{aligned}
&(a, bc) = (a, c) (a, b) (a, b, c),\\
&(ab, c) = (a, c) (a, c, b) (b, c),\\
&(a,b,c)(b,c,a)(c,a,b)=(b,a)(c,a)(c,b)^a(a,b)(a,c)^b(b,c)^a(a,c)(c,a)^b,
\end{aligned}
\end{equation}
where $a^b = b^{-1}ab$.

Let $H, W \subset G$ be subgroups. Then we define $(H, W) \subset G$ as the subgroup generated by all commutators $(h, w), h \in H, w \in W$. In particular, the \emph{commutator subgroup} $G'$ of the group $G$ is $(G, G)$.

For any group $G$, set $\gamma_1(G) = G$ and define inductively $\gamma_{k+1}(G) = (\gamma_{k}(G), G)$. The resulting sequence of groups $\gamma_1(G), \gamma_2(G), \ldots, \gamma_k(G), \ldots$ is called the \emph{lower central series} of $G$.

If $H \subset G$ is normal subgroup, i.\,e. $H = g^{-1}Hg$ for all $g \in G$, we will use the notation $H \lhd G$.

In particular, $\gamma_{k+1}(G) \lhd \gamma_k(G)$, and the quotient group $\gamma_{k}(G) / \gamma_{k+1}(G)$ is abelian. Denote $L^k (G) := \gamma_{k}(G) / \gamma_{k+1}(G)$ and consider the direct sum
$$
L(G) := \bigoplus_{k=1}^{+\infty} L^k (G).
$$
Given an element $a_k \in \gamma_k(G) \subset G$, we denote by $\overline{a}_k$ its conjugacy class in the quotient group~$L^k (G)$. If $a_k \in \gamma_k(G), \; a_l \in \gamma_l(G)$, then $(a_k, a_l) \in \gamma_{k+l}(G)$. Then the Hall--Witt identities imply that $L(G)$ is a graded Lie algebra over $\mathbb Z$ (a Lie ring) with Lie bracket $[\overline{a}_k, \overline{a}_l] := \overline{(a_k, a_l)}$. The Lie algebra $L(G)$ is called the \emph{Lie algebra associated with the lower central series} (or the \emph{associated Lie algebra}) of $G$.

\begin{theorem}[{\cite[Theorem 4.5]{pa-ve}}]\label{gscox}
Let $\racg_\sK$ be right-angled Coxeter group corresponding to a simplicial complex~$\sK$ with $m$ vertices.
Then the commutator subgroup $\racg'_\sK$ has a finite minimal set of generators consisting of $\sum_{J\subset[m]}\rank\widetilde H_0(\sK_J)$ nested commutators
\begin{equation}\label{commuset}
  (g_i,g_j),\quad (g_{i},g_j,g_{k_1}),\quad\ldots,\quad
  (g_{i},g_{j},g_{k_1},g_{k_2},\ldots,g_{k_{m-2}}),
\end{equation}
where $i < j > k_1 > k_2 > \ldots > k_{\ell-2}$, $k_s\ne i$ for all~$s$, and
$i$~is the smallest vertex in a connected component not containing~$j$ of the subcomplex $\sK_{\{k_1,\ldots,k_{\ell-2},j,i\}}$.
\end{theorem}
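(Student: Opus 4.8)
The plan is to separate the statement into a \emph{counting} part, which fixes the number of generators homologically, and a \emph{constructive} part, which produces the generators inside the group. First I would establish the lower bound on the minimal number of generators. By Theorem~\ref{coxfund}(d) we have $\racg'_\sK\cong\pi_1(\rk)$, so the Hurewicz theorem gives $H_1(\rk;\Z)\cong(\racg'_\sK)^{\mathrm{ab}}$, the abelianization. Theorem~\ref{homrk} in degree $1$ then yields $(\racg'_\sK)^{\mathrm{ab}}\cong\bigoplus_{J\subset[m]}\widetilde H_0(\sK_J)$, so that $\rank(\racg'_\sK)^{\mathrm{ab}}=\sum_{J\subset[m]}\rank\widetilde H_0(\sK_J)=:N$. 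Since the minimal number of generators of any group is at least the rank of its abelianization, every generating set of $\racg'_\sK$ has at least $N$ elements; hence it suffices to exhibit a generating set consisting of exactly $N$ nested commutators of the stated form, after which minimality is automatic (and, as a by-product, $(\racg'_\sK)^{\mathrm{ab}}$ is free abelian of rank $N$).

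Next I would verify that the proposed list has exactly $N$ elements. Fix $J\subset[m]$ and put $j=\max J$. The prescription $j>k_1>\dots>k_{\ell-2}$ together with ``$i=\min C$ for a connected component $C$ of $\sK_J$ not containing $j$'' assigns exactly one commutator to each connected component of $\sK_J$ other than the component containing $\max J$; the remaining indices $\{k_1,\dots,k_{\ell-2}\}=J\setminus\{i,j\}$ are then forced, so the support of the commutator is precisely $J$. Thus the number of commutators with support $J$ equals $c(\sK_J)-1=\rank\widetilde H_0(\sK_J)$, where $c(\cdot)$ denotes the number of connected components, and summing over all $J$ recovers $N$.

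The core of the argument is to show that these $N$ commutators generate $\racg'_\sK$. Here I would run the Reidemeister--Schreier procedure for the finite-index subgroup $\racg'_\sK\lhd\racg_\sK$ (of index $2^m$, since $\racg_\sK/\racg'_\sK\cong\Z_2^m$), taking as Schreier transversal the square-free increasing monomials $g_{i_1}\cdots g_{i_k}$ with $i_1<\dots<i_k$, one for each element of $\Z_2^m$. This produces an explicit, redundant generating set. I would then rewrite each Reidemeister--Schreier generator as a product of left-normed commutators $(g_i,g_j,g_{k_1},\dots)$ by repeatedly applying the Hall--Witt identities~\eqref{WH} modulo longer commutators, and normalise to the shape $i<j>k_1>\dots>k_{\ell-2}$ with pairwise distinct indices. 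Two features drive this reduction to the stated form: the relations $g_s^2=1$, which — in contrast to the right-angled Artin case — allow one to eliminate repeated indices, and the commutation relations $g_ig_j=g_jg_i$ for $\{i,j\}\in\sK$, which allow one to reorder and, when the indices all lie in a single connected component of $\sK_J$, to collapse a commutator, leaving exactly one surviving generator per component of $\sK_J$ not containing $\max J$, with $i=\min C$.

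I expect the main obstacle to be exactly this last reduction: tracking how the order-two relations interact with the commutator calculus and proving that the connectivity-and-minimality condition on the pair $(J,C)$ is precisely what remains after all admissible simplifications — equivalently, that the normalised commutators map to the standard basis $[\,C\,]-[\,C_{\max J}\,]$ of $\bigoplus_{J}\widetilde H_0(\sK_J)$ under the isomorphism of Theorem~\ref{homrk}. Once generation is established, the cardinality $N$ of the list meets the homological lower bound from the first step, so the listed commutators form a minimal generating set of $\racg'_\sK$, as claimed.
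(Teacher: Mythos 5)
First, a point of reference: the paper you were given does not prove this statement at all --- Theorem~\ref{gscox} is imported verbatim from \cite[Theorem~4.5]{pa-ve}, so the comparison must be made against the proof there. Your first two steps are correct and coincide with how minimality is obtained in \cite{pa-ve}: the identification $\racg'_\sK\cong\pi_1(\rk)$ (Theorem~\ref{coxfund}(d)), Hurewicz, and Theorem~\ref{homrk} give $(\racg'_\sK)^{\mathrm{ab}}\cong\Z^N$ with $N=\sum_{J\subset[m]}\rank\widetilde H_0(\sK_J)$, and your count of the listed commutators (one for each pair consisting of $J$ and a connected component of $\sK_J$ not containing $\max J$) is also right. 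So if generation by the listed commutators were established, minimality would indeed follow.

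The genuine gap is in the third step, which is the actual content of the theorem. The Reidemeister--Schreier setup is legitimate (the increasing square-free words do form a Schreier transversal for the index-$2^m$ subgroup $\racg'_\sK$), but the reduction of the resulting generators to commutators with pairwise distinct indices, ordered as $i<j>k_1>\dots>k_{\ell-2}$, and satisfying the connectivity-and-minimality condition on $i$, is precisely the hard part, and you leave it as a declared ``main obstacle'' rather than an argument. Worse, the mechanism you propose --- rewriting ``modulo longer commutators'' --- cannot prove generation even in principle: $\racg'_\sK$ is in general not nilpotent (it is free of rank $5$ already for three disjoint points), and knowing that a set generates $\racg'_\sK$ modulo $\gamma_k(\racg_\sK)$ for every $k$ does not imply that it generates $\racg'_\sK$. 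Concretely, for $\sK$ two disjoint points (Example~\ref{twoplracg} of this paper) one has $\racg'_\sK=\langle (g_1,g_2)\rangle\cong\Z$ and $\gamma_k(\racg_\sK)=\langle (g_1,g_2)^{2^{k-2}}\rangle$ for $k\ge2$; the single element $(g_1,g_2)^3$ generates $\racg'_\sK$ modulo every $\gamma_k$, yet it generates only an index-$3$ subgroup. Hence the rewriting must be \emph{exact}, using identities such as $(g_j,g_i,g_i)=(g_i,g_j)^2$ and the exact identity (4.5) of \cite{pa-ve} (quoted later in this paper in the proof of Theorem~\ref{LRCK}), together with a termination argument in the spirit of Proposition~\ref{comb}, where lengths of the words being rewritten strictly decrease. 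This exact commutator-calculus induction is exactly what the proof in \cite{pa-ve} supplies and what is missing from your proposal.
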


\begin{remark}
In~\cite{pa-ve} commutators were nested to the right. Now we nest them to the~left.
\end{remark}

From Theorems~\ref{homrk} and \ref{gscox} we get:
\begin{corollary}\label{h1rk}
The group $H_1(\rk) = \racg_\sK' / \racg_\sK''$ is a free abelian group of rank $\sum_{J\subset[m]}\rank\widetilde H_0(\sK_J)$ with basis consisting of the images of the iterated commutators described in Theorem~\ref{gscox}.
\end{corollary}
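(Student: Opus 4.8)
The plan is to assemble the statement from the three results just quoted, the only genuine content being a rank count. First I would identify the group in question. The complex $\rk$ is path-connected, so its first integral homology is the abelianization of its fundamental group; combining this with part~(d) of Theorem~\ref{coxfund}, which gives $\pi_1(\rk)\cong\racg'_\sK$, yields
$$H_1(\rk)\cong\racg'_\sK/(\racg'_\sK)'=\racg'_\sK/\racg''_\sK,$$
which is exactly the identification asserted in the corollary.

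Next I would pin down the isomorphism type. Taking $k=1$ in Theorem~\ref{homrk} gives $H_1(\rk)\cong\bigoplus_{J\subset[m]}\widetilde H_0(\sK_J)$. Each reduced group $\widetilde H_0(\sK_J)$ is free abelian (it is $\Z^{\,c-1}$, where $c$ is the number of connected components of $\sK_J$), so the direct sum is free abelian of rank $N:=\sum_{J\subset[m]}\rank\widetilde H_0(\sK_J)$. This already establishes that $H_1(\rk)$ is free abelian of the claimed rank, independently of any choice of generators.

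It remains to see that the prescribed commutators form a \emph{basis}, and here the only real step is an elementary observation about $\Z^N$. By Theorem~\ref{gscox} the group $\racg'_\sK$ admits a minimal generating set consisting of precisely $N$ of the nested commutators~\eqref{commuset}; their images generate the abelianization $\racg'_\sK/\racg''_\sK\cong\Z^N$. A free abelian group of rank $N$ generated by $N$ elements must have those elements as a basis: the obvious surjection $\Z^N\twoheadrightarrow\Z^N$ sending the standard generators to the given ones becomes an isomorphism after tensoring with $\Q$ by a dimension count, hence is already injective and therefore bijective. Consequently the images of the commutators listed in Theorem~\ref{gscox} form a basis of $H_1(\rk)$. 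The result is a corollary precisely because no new combinatorics is needed; the one point I would state carefully is this matching of generator count with rank, which is what forces a minimal generating set of the right size to be a basis.
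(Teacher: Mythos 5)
Your proof is correct and follows exactly the route the paper intends: the paper states this corollary as an immediate consequence of Theorems~\ref{homrk} and~\ref{gscox}, which is precisely the combination you use. Your write-up merely makes explicit the routine details (the identification $H_1(\rk)\cong\racg'_\sK/\racg''_\sK$ via Theorem~\ref{coxfund}(d), and the standard fact that $N$ generators of $\Z^N$ form a basis), so there is nothing to correct.
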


\section{The lower central series of a right-angled Coxeter group}

Let $G$ be a group with generators $g_i, i \in I$. An element $a \in G$ can be written (generally, not uniquely) as a word $w = (g_{i_1})^{t_1} (g_{i_2})^{t_2} \cdots (g_{i_p})^{t_p}$, where $t_i \in \mathbb Z$. The \emph{length} of $w$ is defined as $len(w) = \sum_{i=1}^p |t_i|$.

There is the following standard result (see \cite[\S~5.3]{Ma-Car-Sol}):

\begin{proposition}\label{comb}
Let $G$ be a group with generators $g_i, i \in I$. The $k$-th term $\gamma_k(G)$ of the lower central series is generated by simple nested commutators of length greater than or equal to $k$ in generators and their inverses.
\end{proposition}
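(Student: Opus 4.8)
The plan is to introduce the subgroup $N_k\subseteq G$ generated by all simple nested commutators $(x_1,\ldots,x_\ell)$ of length $\ell\ge k$ whose entries $x_j$ range over the generators $g_i$ and their inverses, and to prove $\gamma_k(G)=N_k$ by induction on $k$. One inclusion is routine, so I would dispose of it first: to see $N_k\subseteq\gamma_k(G)$ it suffices to check that a single simple nested commutator $(x_1,\ldots,x_\ell)$ lies in $\gamma_\ell(G)$. This follows by induction on $\ell$ straight from the definition $\gamma_\ell(G)=(\gamma_{\ell-1}(G),G)$, writing $(x_1,\ldots,x_\ell)=((x_1,\ldots,x_{\ell-1}),x_\ell)$ with the inner commutator in $\gamma_{\ell-1}(G)$ and $x_\ell\in G$; since $\ell\ge k$ and the lower central series is decreasing, $\gamma_\ell(G)\subseteq\gamma_k(G)$.

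Before the main induction I would record that every $N_j$ is normal in $G$. It is enough to conjugate a generating commutator $c=(x_1,\ldots,x_\ell)$, $\ell\ge j$, by a single $g_i^{\pm1}$: the identity $c^{g_i^{\pm1}}=c\,(c,g_i^{\pm1})$ rewrites the conjugate as $c\cdot(x_1,\ldots,x_\ell,g_i^{\pm1})$, a product of two simple nested commutators of length $\ge j$, hence an element of $N_j$. Because conjugation is an automorphism and the $g_i^{\pm1}$ generate $G$, this yields $N_j\lhd G$.

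The heart of the matter is the inductive step, and the device that makes it painless is to pass to the quotient $\bar G=G/N_{k+1}$, which is legitimate since $N_{k+1}\lhd G$. The base case $k=1$ is immediate, as $N_1=G=\gamma_1(G)$. Assuming $\gamma_k(G)=N_k$, I would observe that the image $\bar N_k$ is \emph{central} in $\bar G$: each generating commutator $c=(x_1,\ldots,x_\ell)$ of $N_k$ satisfies $(c,g_i^{\pm1})=(x_1,\ldots,x_\ell,g_i^{\pm1})\in N_{k+1}$, so $\bar c$ commutes with every generator $\bar g_i^{\pm1}$ of $\bar G$ and therefore lies in $Z(\bar G)$; as these $\bar c$ generate $\bar N_k$, we get $\bar N_k\subseteq Z(\bar G)$. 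Consequently $(N_k,G)\subseteq N_{k+1}$, whence $\gamma_{k+1}(G)=(\gamma_k(G),G)=(N_k,G)\subseteq N_{k+1}$. Combined with the routine inclusion $N_{k+1}\subseteq\gamma_{k+1}(G)$ from the first paragraph, this gives $\gamma_{k+1}(G)=N_{k+1}$ and closes the induction.

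The step I expect to be the real obstacle is precisely the hard inclusion $\gamma_{k+1}(G)\subseteq N_{k+1}$ at the inductive stage; done naively through the Hall--Witt identities it forces an unwieldy expansion of a general $(x,y)$ with $x\in N_k$ and $y\in G$ into products of commutators together with error terms. The centrality argument above is what sidesteps this bookkeeping, reducing everything to the single fact that $\bar c$ commutes with the generators of $\bar G$. The one point demanding care is that simple nested commutators are permitted to carry inverse generators $g_i^{-1}$ as entries, so that both the conjugation formula $c^{g_i^{-1}}=c\,(x_1,\ldots,x_\ell,g_i^{-1})$ and the commutation relations $(c,g_i^{-1})\in N_{k+1}$ remain inside the generating set defining the subgroups $N_j$.
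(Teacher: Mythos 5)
Your proof is correct, and it takes a genuinely different route from the paper's. The paper attacks the hard inclusion $\gamma_{k+1}(G)\subseteq N_{k+1}$ head-on: it writes an element of $\gamma_{k+1}(G)$ as a product of commutators $(r,l)$ and $(l,r)$ with $r\in\gamma_k(G)$, $l\in G$, peels generators off $l$ one at a time via the Hall--Witt identity $(r,l_2\cdot g_i')=(r,g_i')\,(r,l_2)\,((r,l_2),g_i')$, and runs a secondary induction on the word length of $l$, arguing that the rewriting terminates because applying the identities never decreases commutator length. You replace all of that bookkeeping with a structural device: the subgroup $N_{k+1}$ is normal in $G$ (conjugation by $g_i^{\pm1}$ sends a generating commutator $c$ to $c\,(c,g_i^{\pm1})$, which stays inside the generating set), and in the quotient $G/N_{k+1}$ the image of $N_k$ is central, so $(N_k,G)\subseteq N_{k+1}$ is immediate and, together with the induction hypothesis $\gamma_k(G)=N_k$ and the routine inclusion $N_{k+1}\subseteq\gamma_{k+1}(G)$, closes the induction. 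The trade-off is this: the paper's rewriting argument is in effect an algorithm that exhibits a given element of $\gamma_{k+1}$ explicitly as a product of simple nested commutators --- in the spirit of the explicit low-degree computations elsewhere in the paper --- though its termination claim is stated somewhat informally; your centrality argument is shorter, rigorous without any termination concerns, and is essentially the standard textbook proof of this fact, at the cost of not producing the explicit rewriting by itself.
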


\begin{proof}

Apply induction on $k$. The base $k = 1$ is obvious.

Assuming the statement holds for some $k$, we prove it for $k + 1$. Each element of $\gamma_{k+1}(G)$ is represented as a product of commutators of the form $(r, l)$ and $(l, r)$, where $r \in \gamma_k(G), l \in G$. Therefore, it suffices to prove that any commutator $(r, l)$ and any commutator $(l, r)$, $r \in \gamma_k(G)$, $l \in G$, can be written in the required form. We write $g_i'$ for either~$g_i$~or~$g_i^{-1}$. Write the element $l$ as a word $l_1$ in the generators $g_i$, and write $l_1 = l_2 \cdot g_i'$ for some~$i$, so that $len(l_2) < len(l_1)$. Apply identities~\eqref{WH}:
\[
(r, l) = (r, l_2 \cdot g_i') = (r, g_i') \cdot (r, l_2) \cdot ((r, l_2), g_i').
\]
By writing the element $r \in \gamma_k(G)$ as a product of commutators of length $\ge k$ and consequently applying identities~\eqref{WH} to $(r, g_i')$, we obtain a product of commutators of length $\ge k+1$. Then we apply the same procedure to $(r, l_2)$. Since $len(l_2) < len(l_1) = len(l)$ and $len(l) < +\infty$, continuing this process, we eventually come to a product of commutators of length $\ge k+1$; this follows from the fact that the length of the commutators does not decrease after applying identities~\eqref{WH}. For the bracket $((r, l_2), g_i')$, we use identities~\eqref{WH} to expand the commutator $(A, g_i')$, where $A$ is an expression of $(r, l_2)$ via commutators of length $\ge k$. Since the application of identities~\eqref{WH} does not decrease the length of the commutators, we obtain a product of commutators of length $\ge k + 1$. The argument for $(l, r)$ is similar to $(r, l)$.
\end{proof}

\begin{corollary}
Let $\racg_\sK$ be a right-angled Coxeter group with generators $g_i$. Then the group $\gamma_k(\racg_\sK)$ is generated by commutators of length greater than or equal to~$k$ in generators $g_i$.
\end{corollary}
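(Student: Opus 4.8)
The plan is to deduce this directly from Proposition~\ref{comb} by specializing $G$ to the right-angled Coxeter group $\racg_\sK$ and then eliminating the inverse generators. Proposition~\ref{comb} already asserts that $\gamma_k(\racg_\sK)$ is generated by simple nested commutators of length $\ge k$ in the generators $g_i$ \emph{and their inverses} $g_i^{-1}$; the sole discrepancy with the statement to be proved is the appearance of these inverses.

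First I would invoke the defining relations of $\racg_\sK$. Since $g_i^2 = 1$ for every $i \in \{1, \ldots, m\}$, each generator is its own inverse, so $g_i^{-1} = g_i$ as elements of $\racg_\sK$. Consequently the alphabets $\{g_1^{\pm 1}, \ldots, g_m^{\pm 1}\}$ and $\{g_1, \ldots, g_m\}$ coincide as subsets of $\racg_\sK$, and every occurrence of a symbol $g_i^{-1}$ inside a nested commutator may be replaced by $g_i$ without changing the element it represents. I would then check that this substitution leaves the length unchanged: a single occurrence of $g_i^{-1}$ contributes $|t| = 1$ to the length, exactly as a single occurrence of $g_i$ does, so the length bound $\ge k$ is preserved verbatim. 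Hence every generating commutator supplied by Proposition~\ref{comb} becomes, after the replacement, a simple nested commutator of length $\ge k$ in the generators $g_i$ alone, which is precisely the required generating set.

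There is essentially no genuine obstacle here: the corollary is a one-line specialization whose only content is the observation $g_i^{-1} = g_i$. The one point worth stating explicitly is the invariance of length under the substitution $g_i^{-1} \mapsto g_i$, which is what guarantees that the bound survives the passage from the Coxeter alphabet with inverses to the alphabet of generators only.
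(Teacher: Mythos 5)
Your proof is correct and is essentially identical to the paper's: the paper's entire proof is the one-line remark that $g_i^{-1}=g_i$ in a right-angled Coxeter group, which is exactly your specialization of Proposition~\ref{comb}. Your extra observation that the length bound is preserved under the substitution $g_i^{-1}\mapsto g_i$ is a harmless elaboration of the same argument.
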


\begin{proof}
In the case of a right-angled Coxeter group we have $g_i^{-1} = g_i$.
\end{proof}

\begin{proposition}\label{kv}
The square of any element of $\gamma_k(\racg_\sK)$ is contained in $\gamma_{k+1}(\racg_\sK)$.
\end{proposition}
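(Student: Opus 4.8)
The plan is to show that the abelian group $L^k(\racg_\sK)=\gamma_k(\racg_\sK)/\gamma_{k+1}(\racg_\sK)$ is annihilated by~$2$; this is exactly the assertion, since $a^2\in\gamma_{k+1}(\racg_\sK)$ is equivalent to $2\overline a=0$ in $L^k(\racg_\sK)$. Because $L^k(\racg_\sK)$ is abelian, multiplication by~$2$ is additive, so it is enough to verify $2\overline c=0$ for $\overline c$ running over a generating set. By the corollary to Proposition~\ref{comb}, the group $\gamma_k(\racg_\sK)$ is generated by simple nested commutators of length~$\ge k$; those of length~$>k$ already lie in $\gamma_{k+1}(\racg_\sK)$, so $L^k(\racg_\sK)$ is generated by the images of simple nested commutators of length exactly~$k$. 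Thus it suffices to prove $c^2\in\gamma_{k+1}(\racg_\sK)$ for every such~$c$.

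Next I would exploit that each generator is an involution. Writing a length-$k$ simple nested commutator (nested to the left) as $c=(d,g_j)$ with $d=(g_{i_1},\ldots,g_{i_{k-1}})\in\gamma_{k-1}(\racg_\sK)$ and $g_j=g_{i_k}$, I would use $g_j^2=1$, hence $g_j^{-1}=g_j$, to compute the conjugate $g_j^{-1}cg_j$. A direct calculation from $c=d^{-1}g_jdg_j$ gives $g_j^{-1}cg_j=g_jd^{-1}g_jd=c^{-1}$. Feeding this into the definition of the commutator yields the key identity
$$(c,g_j)=c^{-1}\,g_j^{-1}cg_j=c^{-1}c^{-1}=c^{-2}.$$
Since $c\in\gamma_k(\racg_\sK)$ and $g_j\in\racg_\sK=\gamma_1(\racg_\sK)$, the left-hand side lies in $\gamma_{k+1}(\racg_\sK)$; therefore $c^{-2}$, and hence $c^2$, lies in $\gamma_{k+1}(\racg_\sK)$, as required. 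The base case $k=1$ is trivial, since then $c=g_{i_1}$ and $c^2=1$.

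The main obstacle I expect is the identity $g_j^{-1}cg_j=c^{-1}$: this is where the Coxeter relation $g_j^2=1$ is essential, and it is precisely what fails for right-angled Artin groups, matching the abstract's remark that $L(\racg_\sK)$ over $\mathbb Z_2$ behaves differently. It is worth noting that the identity in fact holds for $(x,g_j)$ with arbitrary $x$ and any involution $g_j$, so the argument is robust and does not depend on the inner structure of~$d$; the only structural input is that a simple nested commutator is bracketed on the right against a single generator. I would also double-check the legitimacy of reducing to generators: in the $\mathbb Z$-module $L^k(\racg_\sK)$ one has $2\bigl(\sum_\alpha n_\alpha\overline c_\alpha\bigr)=\sum_\alpha n_\alpha(2\overline c_\alpha)$, so annihilation by~$2$ on generators propagates to the whole group.
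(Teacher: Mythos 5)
Your proof is correct, and its computational heart coincides with the paper's: both rest on the fact that if $g_j$ is an involution and $c=(d,g_j)$, then $c^2$ is itself a commutator of higher weight (you write $(c,g_j)=c^{-2}$; the paper's base case is the equivalent identity $(b,g_i)^2=(g_i,(b,g_i))$). The difference is in how each argument passes from these special elements to an arbitrary element of $\gamma_k(\racg_\sK)$. The paper works entirely inside the group: it decomposes an arbitrary element as a product $a_1a_2\cdots a_n$ of commutators $(b_i,g_{p_i})$ or $(g_{p_i},b_i)$ with $b_i\in\gamma_{k-1}(\racg_\sK)$ and inducts on $n$ via the identity
\[
a_1\cdots a_n\,a_1\cdots a_n=\bigl(a_1^{-1},(a_2\cdots a_n)^{-1}\bigr)\cdot(a_2\cdots a_n)\,a_1^2\,(a_2\cdots a_n)^{-1}\cdot(a_2\cdots a_n)^2,
\]
whose factors lie in $\gamma_{2k}\subset\gamma_{k+1}$, in $\gamma_{k+1}$ (a conjugate of $a_1^2$, using normality), and in $\gamma_{k+1}$ (by induction on $n$). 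You instead pass at once to the abelian quotient $L^k(\racg_\sK)=\gamma_k/\gamma_{k+1}$, where the claim reads $2\overline a=0$, and verify it only on the generators supplied by the corollary to Proposition~\ref{comb}; the extension to all of $L^k(\racg_\sK)$ is then automatic by additivity, so your globalization step is essentially free and replaces the paper's induction and product identity. The one point you should spell out is that a simple nested commutator of length $l>k$ lies in $\gamma_l\subset\gamma_{k+1}$ (immediate by induction from the paper's remark that $(a_k,a_l)\in\gamma_{k+l}$ for $a_k\in\gamma_k$, $a_l\in\gamma_l$), since this is what kills the longer generators in the quotient. Both routes use the same inputs --- the corollary to Proposition~\ref{comb} and the relations $g_j^2=1$ --- but yours is shorter and more structural, while the paper's is more explicit and never leaves the group.
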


\begin{proof}
We use $\gamma_k$ instead of $\gamma_k(\racg_\sK)$ in this proof.

Let $a \in \gamma_k$. If $k = 1$, then $a = \prod_{i = 1}^n g_{k_i}$. If $k > 1$, then $a = \prod_{i=1}^n a_i$, where $a_i = (b_i, g_{p_i})$ or $a_i = (g_{p_i}, b_i)$, $b_i \in \gamma_{k-1}$.
We use induction on $n$.

Let $n = 1$. The case $k = 1$ is obvious (because $g_k^2 = 1$). If $k > 1$, then $a = (b, g_i)$ or $a = (g_i, b)$ for some $b \in \gamma_{k-1}$. For $a = (b, g_i)$ we have $a^2 = (b, g_i)(b, g_i) = (g_i, (b, g_i)) \in \gamma_{k+1}$, and for $a = (g_i, b)$ we have $a^2 = (g_i, b)(g_i, b) = (g_i, (g_i, b)) \in \gamma_{k+1}$.

Suppose now the statement is proved for $n - 1$. Let $a = \prod_{i=1}^{n} a_i$ and $a^2 = \prod_{i=1}^{n} a_i \cdot \prod_{i=1}^{n} a_i$. We have:
\[
a_1 a_2 \cdots a_n a_1 a_2 \cdots a_n = (a_1^{-1}, (a_2 \cdots a_n)^{-1}) \cdot (a_2 \cdots a_n)a_1^2(a_2 \cdots a_n)^{-1} \cdot (a_2 \cdots a_n)^2.
\]
Clearly, the first factor lies in $\gamma_{2k} \subset \gamma_{k+1}$. The second factor lies in $\gamma_{k+1}$ as a conjugate to $a_1^2$ (by induction). The last factor also lies in $\gamma_{k+1}$ by induction.
\end{proof}

\section{Graded components of the associated Lie algebra $L(\racg_\sK)$}\label{gradcomprc}

In this section we consider the associated Lie algebra $L(\racg_\sK)$ of a right-angled Coxeter group.

Here is an immediate corollary of Proposition~\ref{kv}:
\begin{proposition}\label{liz2}
$L(\racg_\sK)$ is a Lie algebra over $\mathbb Z_2$.
\end{proposition}

Hereinafter $\mathbb Z_2$ is a field of two elements.

We denote by $\FL_{\mathbb{Z}_2}\langle \mu_1, \mu_2, \ldots, \mu_n \rangle$ a free graded Lie algebra over $\mathbb Z_2$ with $n$ generators~$\mu_i$, where $\deg \mu_i = 1$.

For any simplicial complex $\sK$ we consider the \emph{graph Lie algebra} over $\mathbb Z_2$:
$$
L_\sK := \FL_{\mathbb{Z}_2}\langle \mu_1, \mu_2, \ldots, \mu_n \rangle / ([\mu_i, \mu_j] = 0 \text{ for } \{i, j\} \in \sK).
$$
Clearly, $L_\sK$ depends only on the $1$-skeleton $\sK^1$ (a graph), however, as in the case of right-angled Coxeter groups, it is more convenient for us to work with simplicial complexes.
\begin{proposition}\label{epinmono}
There is an epimorphism of Lie algebras $\phi : L_\sK \rightarrow L(\racg_\sK)$.
\end{proposition}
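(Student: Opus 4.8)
The plan is to define $\phi$ on generators and invoke the universal property of the free Lie algebra, then check that the defining relations of $L_\sK$ are respected and that the resulting map is onto. First I would recall that the degree-one component $L^1(\racg_\sK) = \racg_\sK / \gamma_2(\racg_\sK)$ is the abelianization of $\racg_\sK$; because $g_i^2 = 1$, it is the $\mathbb{Z}_2$-vector space spanned by the images $\overline{g}_1, \ldots, \overline{g}_m$ of the generators. I set $\phi(\mu_i) := \overline{g}_i \in L^1(\racg_\sK)$. By Proposition~\ref{liz2} the target $L(\racg_\sK)$ is a graded Lie algebra over $\mathbb{Z}_2$, and since $\deg \mu_i = 1$ agrees with the degree of $\overline{g}_i$, the universal property of the free graded Lie algebra $\FL_{\mathbb{Z}_2}\langle \mu_1, \ldots, \mu_m \rangle$ extends this assignment uniquely to a homomorphism of graded $\mathbb{Z}_2$-Lie algebras, carrying a bracket $[\mu_{i_1}, \ldots, \mu_{i_k}]$ to the class $\overline{(g_{i_1}, \ldots, g_{i_k})}$.

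Next I would verify that this homomorphism descends to the quotient $L_\sK = \FL_{\mathbb{Z}_2}\langle \mu_i \rangle / ([\mu_i, \mu_j] = 0,\ \{i,j\} \in \sK)$, i.e.\ that every defining relation maps to zero. For an edge $\{i,j\} \in \sK$ the generators commute, $g_i g_j = g_j g_i$, so the group commutator $(g_i, g_j)$ is trivial and hence $\phi([\mu_i, \mu_j]) = [\overline{g}_i, \overline{g}_j] = \overline{(g_i, g_j)} = 0$ in $L^2(\racg_\sK)$. Thus $\phi$ vanishes on all the defining relators and factors through $L_\sK$, giving a well-defined homomorphism $\phi : L_\sK \to L(\racg_\sK)$.

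It remains to show that $\phi$ is surjective. The key input is the Corollary to Proposition~\ref{comb}: the term $\gamma_k(\racg_\sK)$ is generated by simple nested commutators of length $\ge k$ in the generators $g_i$. Consequently the graded component $L^k(\racg_\sK) = \gamma_k(\racg_\sK) / \gamma_{k+1}(\racg_\sK)$ is spanned, as a $\mathbb{Z}_2$-vector space, by the classes $\overline{(g_{i_1}, \ldots, g_{i_k})}$ of length-$k$ nested commutators, and each such class is exactly the image $\phi([\mu_{i_1}, \ldots, \mu_{i_k}])$. Hence $\phi$ hits every graded piece of $L(\racg_\sK)$, so it is an epimorphism.

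I expect the construction to be essentially routine; the one point genuinely requiring care is that $L(\racg_\sK)$ be generated by its degree-one part, so that surjectivity reduces to realizing the nested-commutator classes as images of brackets of the $\mu_i$. Keeping the $\mathbb{Z}_2$-coefficients straight throughout (via Proposition~\ref{liz2}) is what makes $\phi$ a map of $\mathbb{Z}_2$-Lie algebras rather than merely of $\mathbb{Z}$-Lie rings. The genuine difficulty of the subject, namely describing the kernel of $\phi$, lies beyond this statement and is the content of the later propositions.
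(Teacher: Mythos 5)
Your proof is correct and follows essentially the same route as the paper's: define the map on generators via the universal property of the free Lie algebra over $\mathbb{Z}_2$ (using Proposition~\ref{liz2}), check that the edge relations $[\mu_i,\mu_j]$ map to zero so the map factors through $L_\sK$, and conclude surjectivity from the fact that $L(\racg_\sK)$ is generated in degree one. The only difference is one of detail: you justify degree-one generation explicitly via the corollary to Proposition~\ref{comb}, whereas the paper simply asserts that $L(\racg_\sK)$ is generated by the classes $\overline{g}_i$.
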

\begin{proof}
According to Proposition~\ref{liz2}, $L(\racg_\sK)$ is a Lie algebra over $\mathbb Z_2$, generated by the elements $\overline{g}_i \in \gamma_1(\racg_\sK) / \gamma_2(\racg_\sK), i = 1, \ldots, m$.
By definition of a free Lie algebra, we have an epimorphism
$$
\widetilde{\phi} \colon \FL_{\mathbb{Z}_2}\langle \mu_1, \mu_2, \ldots, \mu_n \rangle \rightarrow L(\racg_\sK), \quad \mu_i \mapsto \overline{g}_i.
$$
Since there is a relation $[\overline{g}_i, \overline{g}_j] = 0$ for $\{i, j\} \in \sK$ in the Lie algebra $L(\racg_\sK$), the epimorphism $\widetilde{\phi}$ factors through a required epimorphism $\phi$.
\end{proof}
In fact, the homomorphism $\phi$ from the proposition above is not injective, and the Lie algebras $L_\sK$ and $L(\racg_\sK)$ are not isomorphic. This distinguishes the case of right-angled Coxeter groups from the case of the right-angled Artin groups, where the associated Lie algebra $L(\raag_\sK)$ is isomorphic to the graph Lie algebra over $\mathbb Z$, see~\cite{Duch-Krob},~\cite{WaDe},~\cite{Papa-Suci}.
\begin{example}\label{twoplracg}
Let $\sK$ consist of two disjoint points, i.\,e. $\sK = \{1, 2\}$. Then $L_\sK = \FL_{\mathbb{Z}_2}\langle \mu_1, \mu_2 \rangle = \FL_{\mathbb{Z}_2}\langle \mu_1 \rangle \ast \FL_{\mathbb{Z}_2}\langle \mu_2 \rangle$ (hereinafter $\ast$ denotes the free product of Lie algebras or groups). The lower central series of $\racg_\sK = \mathbb{Z}_2 \ast \mathbb{Z}_2$ is as follows: $\gamma_1 (\racg_\sK) = \mathbb{Z}_2 \ast \mathbb{Z}_2$, and for $k \ge 2$ we have $\gamma_k (\racg_\sK) \cong \mathbb{Z}$ is an infinite cyclic group generated by the commutator $(g_1, g_2, g_1, \ldots, g_1)$ of length $k$. Proposition~\ref{kv} implies that $\gamma_k(\racg_\sK) / \gamma_{k+1}(\racg_\sK) = \mathbb{Z}_2$ for $k > 1$, and $\gamma_1(\racg_\sK) / \gamma_2(\racg_\sK) = \mathbb{Z}_2 \oplus \mathbb{Z}_2$. Consider the algebra $L(\racg_\sK)$. From the arguments above, $L(\racg_\sK) = (\mathbb{Z}_2 \oplus \mathbb{Z}_2) \oplus \mathbb{Z}_2 \oplus \cdots \oplus \mathbb{Z}_2 \oplus \cdots$.
It is easy to see that $L^k(\racg_\sK) \cong L^k_\sK$ for $k = 1, 2$. However, $L^3_\sK \cong \mathbb Z_2 \langle [\mu_1, \mu_2,\mu_1], [\mu_1,\mu_2,\mu_2] \rangle$, while $L^3(\racg_\sK) \cong \mathbb Z_2$. Therefore,
$$
L^3(\racg_\sK) \cong L^3_\sK / ([\mu_1, \mu_2,\mu_1] = [\mu_1,\mu_2,\mu_2]).
$$
It follows that the homomorphism $\phi$ from Proposition~\ref{epinmono} is not injective.
\end{example}

\begin{proposition}\label{rc2point}
Let $\sK$ consist of two disjoint points. Then
$$
L(\racg_\sK) \cong L_\sK \big/ \bigl([a, \mu_1] = [a, \mu_2], \; \; [a, \underbrace{\mu_1, \ldots, \mu_1}_{2k+1}, a] = 0, \; k \ge 0\bigr),
$$
where $a = [\mu_1, \mu_2]$.
\end{proposition}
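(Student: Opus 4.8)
The plan is to upgrade the epimorphism $\phi\colon L_\sK\to L(\racg_\sK)$ of Proposition~\ref{epinmono} to an isomorphism after quotienting by the two families of relations, by first checking that they hold in the target and then matching graded dimensions. From Example~\ref{twoplracg} I know the target: $\dim_{\mathbb{Z}_2}L^1(\racg_\sK)=2$ and $\dim_{\mathbb{Z}_2}L^d(\racg_\sK)=1$ for every $d\ge2$, the class $L^d$ being generated by the length-$d$ commutator $\overline{(g_1,g_2,g_1,\ldots,g_1)}$. Since $\racg_\sK\cong\mathbb{Z}_2\ast\mathbb{Z}_2$ is the infinite dihedral group, its commutator subgroup $\gamma_2(\racg_\sK)\cong\mathbb{Z}$ is abelian; hence every bracket of two classes of internal degree $\ge2$ vanishes in $L(\racg_\sK)$, which immediately kills the image of the second family $[a,\mu_1,\ldots,\mu_1,a]$. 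A direct computation in the dihedral group also shows that bracketing a generator of $\gamma_k$ with $g_1$ or with $g_2$ gives the same class in $\gamma_{k+1}/\gamma_{k+2}$; for $a$ this is precisely the first relation $[a,\mu_1]=[a,\mu_2]$. Thus $\phi$ factors through a surjection $\overline{\phi}\colon Q:=L_\sK/I\twoheadrightarrow L(\racg_\sK)$, and it remains to prove $\dim_{\mathbb{Z}_2}Q^d\le1$ for $d\ge2$.

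For the upper bound I would pass to the generators $\mu_1$ and $\nu:=\mu_1+\mu_2$ of $L_\sK$. Over $\mathbb{Z}_2$ one has $a=[\mu_1,\mu_2]=[\mu_1,\nu]=\mathrm{ad}_{\mu_1}\nu$, so the expected tower is $b_d:=[a,\underbrace{\mu_1,\ldots,\mu_1}_{d-2}]=\mathrm{ad}_{\mu_1}^{\,d-1}\nu$. Writing $P_{i,j}:=[\mathrm{ad}_{\mu_1}^{\,i}\nu,\mathrm{ad}_{\mu_1}^{\,j}\nu]$, the first relation reads $[a,\nu]=P_{0,1}=0$, and the relation $[a,\mu_1^{2k+1},a]=0$ reads $P_{2k+2,1}=0$; so the imposed relations are exactly $P_{m,1}=0$ for all even $m\ge0$. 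The heart of the argument is the claim that $P_{i,j}=0$ in $Q$ for \emph{all} $i,j\ge0$, which I would prove by induction on the antidiagonal $s=i+j$.

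For the inductive step, since $\mathrm{ad}_{\mu_1}$ is a derivation one has $\mathrm{ad}_{\mu_1}(P_{i,j})=P_{i+1,j}+P_{i,j+1}$; applying this to the vanishing of all $P$'s with $i+j=s-1$ gives $P_{i+1,j}=P_{i,j+1}$ for $i+j=s-1$, so (using $\mathbb{Z}_2$ to drop signs) all entries $P_{i,s-i}$ on the $s$-th antidiagonal coincide. It then suffices to kill one entry: for even $s$ the middle entry $P_{s/2,s/2}=[b_{s/2+1},b_{s/2+1}]=0$ since $[x,x]=0$ holds in any Lie algebra, and for odd $s$ the entry $P_{s-1,1}$ vanishes because $s-1$ is even, i.e.\ it is one of the imposed relations (with $s=1$ giving the base case $P_{0,1}=0$). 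Granting this, $M:=\mathrm{span}_{\mathbb{Z}_2}\{\mathrm{ad}_{\mu_1}^{\,j}\nu:j\ge0\}$ is abelian and $\mathrm{ad}_{\mu_1}$-invariant, so $S:=\mathbb{Z}_2\mu_1+M$ is a Lie subalgebra of $Q$; as it contains $\mu_1$ and $\mu_2=\mu_1+\nu$, it is all of $Q$, whence $Q^d\subseteq\langle b_d\rangle$ and $\dim_{\mathbb{Z}_2}Q^d\le1$ for $d\ge2$. Comparing with the surjection $\overline{\phi}$ forces it to be a degreewise isomorphism, proving the proposition.

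The main obstacle is the vanishing claim $P_{i,j}=0$. What makes it tractable is the derivation recursion, which forces each antidiagonal to be constant, reducing the task to killing a single element per antidiagonal; the even antidiagonals are handled for free by $[x,x]=0$, and it is precisely the odd ones that consume the second family of relations, which explains why those relations are indexed by odd internal degrees. In writing this up I would be careful that every sign cancellation is genuinely a $\mathbb{Z}_2$-phenomenon (the step "all antidiagonal entries are equal" rather than merely "equal up to sign" is exactly where characteristic $2$ is essential), and I would double-check the two dictionary entries relation $\Leftrightarrow P_{m,1}=0$ against the left-normed bracket conventions fixed earlier.
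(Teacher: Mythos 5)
Your proof is correct, and its computational engine coincides with the paper's own, though your packaging is genuinely cleaner. Up to the change of basis $\nu=\mu_1+\mu_2$ (so that $a=\mathrm{ad}_{\mu_1}\nu$) and a shift of indices, your elements $P_{i,j}=[\mathrm{ad}_{\mu_1}^{\,i}\nu,\mathrm{ad}_{\mu_1}^{\,j}\nu]$ are exactly the paper's elements $A_{n,i}$ (brackets of two strings $[a b\cdots b]$ with $b=\mu_1$), your derivation identity $\mathrm{ad}_{\mu_1}(P_{i,j})=P_{i+1,j}+P_{i,j+1}$ is the paper's Jacobi-identity step $A_{k,i}=[A_{k-1,i},b]+A_{k,i+1}$ with the first term killed by induction, and your even/odd dichotomy --- the middle entry of an even antidiagonal dies by $[x,x]=0$, while each odd antidiagonal consumes one imposed relation --- is precisely how the paper distinguishes even $n$ (where $A_{n+1,i}$ equals a self-bracket) from odd $n$ (where a new relation must be added). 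What your write-up adds beyond the paper: (i) an explicit verification that both families of relations hold in $L(\racg_\sK)$ (via $(c,g_1)=(c,g_2)=c^{-2}$ for $c=(g_1,g_2)$ and the abelianness of $\gamma_2\cong\mathbb Z$ in the infinite dihedral group), a step the paper leaves implicit even though it is needed for the quotient map $Q\to L(\racg_\sK)$ to exist at all; (ii) the substitution $\nu=\mu_1+\mu_2$, which collapses the paper's two candidate generators $x,y$ of each graded piece, and the inductive claim $x=y$, into the single tower $\mathrm{ad}_{\mu_1}^{\,d-1}\nu$; (iii) the subalgebra argument $Q=\mathbb Z_2\mu_1+M$ with $M$ abelian and $\mathrm{ad}_{\mu_1}$-invariant, which yields $\dim_{\mathbb Z_2}Q^d\le 1$ for all $d\ge 2$ in one stroke rather than degree by degree. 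The only slip is cosmetic: $[a,\nu]$ is $P_{1,0}$ rather than $P_{0,1}$, but over $\mathbb Z_2$ these coincide by antisymmetry, so nothing breaks.
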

\begin{proof}
In this proof, we denote a commutator of the form $[\mu_{p_1}, \mu_{p_2}, \ldots, \mu_{p_l}]$ by $[p_1p_2\ldots p_l]$. We use induction on the dimension $n$ of the graded components. The base $n = 3, 4, 5, 6, 7$ is verified easily.

Assume the statement is proved for dimensions less than $n$. By induction, $L^n(\racg_\sK)$ generated by the two elements $x := [\underbrace{1212 \ldots 1211}_{n}]$ and $y := [\underbrace{1212 \ldots 1212}_{n}]$ for even $n$, and by the two elements $x := [\underbrace{1212 \ldots 121}_{n}]$ and $y := [\underbrace{1212 \ldots 122}_{n}]$ for odd $n$. On the other hand, we have seen in Example~\ref{twoplracg} that $L^n(\racg_\sK) \cong \mathbb Z_2$ when $n \geq 2$. Below we show that for even $n > 3$ the relation $x = y$ follows from the relations $[a, \mu_1] = [a, \mu_2]$ and $[a, \underbrace{\mu_1, \ldots, \mu_1}_{2k+1}, a] = 0$ where the commutator length is less than $n$ (i.\,e. $k < \frac{n - 5}{2}$), and for odd $n > 3$ we need to add a new relation $[a, \mu_1, \ldots, \mu_1, a] = 0$ where is the commutator has length $n$.

For even $n$ we have
\[
[\underbrace{1212 \ldots 1212}_{n}] = [[[\underbrace{1212 \ldots 12}_{n - 2}], 1], 2] = [[2, [\underbrace{1212 \ldots 12}_{n-2}]], 1] + [[2,1], [\underbrace{1212 \ldots 12}_{n-2}]] =
\]
\[
 = [\underbrace{1212 \ldots 122}_{n-1}1] + [[21], [\underbrace{1212 \ldots 12}_{n-2}]] = [\underbrace{1212 \ldots 1211}_{n}] + [[21], [\underbrace{1212 \ldots 12}_{n-2}]],
\]
and for odd $n$ we have
\[
[\underbrace{1212 \ldots 121}_{n}] = [[[\underbrace{1212 \ldots 1}_{n - 2}], 2], 1] = [[1, [\underbrace{1212 \ldots 1}_{n-2}]], 2] + [[2,1], [\underbrace{1212 \ldots 1}_{n-2}]] =
\]
\[
 = [\underbrace{1212 \ldots 11}_{n-1}2] + [[21], [\underbrace{1212 \ldots 1}_{n-2}]] = [\underbrace{1212 \ldots 122}_{n}] + [[21], [\underbrace{1212 \ldots 1}_{n-2}]],
\]
where the last equality in both cases follows from the inductive hypothesis. We obtain that $x - y = [[21], [\underbrace{1212 \ldots 12}_{n-2}]]$ for even $n$ and $x - y = [[21], [\underbrace{1212 \ldots 121}_{n-2}]]$ for odd~$n$. It follows from the induction hypothesis that $[\underbrace{1212 \ldots 12}_{k}] = [\underbrace{1211 \ldots 11}_{k}]$ for any even $k < n$, and $[\underbrace{1212 \ldots 21}_{k}] = [\underbrace{1211 \ldots 11}_{k}]$ for any odd $k < n$. We introduce the notation $a = [12], b = \mu_1$. In the new notation, $x - y = [a, [a\underbrace{bb \ldots b}_{n - 4}]]$. Denote
$$
A_{n, i} := [[a\underbrace{bb \ldots b}_{i + 1}],[a\underbrace{bb \ldots b}_{n - 6 - i}]] \in L^{n-1}(\racg_\sK), \quad -1 \leq i \leq n - 6.
$$
Then $x - y = A_{n+1, -1}$ and, by the induction hypothesis, $A_{k, -1} = 0$ for all $k < n+1$. From the Jacobi identity and induction we have:
\[
0 = A_{k, -1} = [a, [a\underbrace{bb \ldots b}_{k - 5}]] = [[a\underbrace{bb \ldots b}_{k - 6}a], b] + [[a, b], [a\underbrace{bb \ldots b}_{k - 6}]] =
\]
\[
 = [A_{k - 1, -1}, b] + [[a, b], [a\underbrace{bb \ldots b}_{k - 6}]] = [[a, b], [a\underbrace{bb \ldots b}_{k - 6}]] = A_{k, 0}.
\]
Hence, $A_{k, 0} = 0$ for $k < n + 1$. Next, we have
\[
A_{k, 0} = [A_{k - 1, 0}, b] + [[abb], [a\underbrace{bb \ldots b}_{k - 7}]] = A_{k, 1},
\]
hence $A_{k, 1} = 0$. Continuing in this fashion, we obtain $A_{k, i} = 0$ for $-1 \leq i \leq k - 6$ and $k < n + 1$.
Now we have
\[
A_{n+1, -1} = [a, [a\underbrace{bb \ldots b}_{n - 4}]] = [a\underbrace{bb \ldots b}_{n - 4}a] = [[[a\underbrace{bb \ldots b}_{n - 5}], b], a] =
\]
\[
 = [[a\underbrace{bb \ldots b}_{n - 5}a], b] + [[a, b], [a\underbrace{bb \ldots b}_{n - 5}]] = [[\overbrace{A_{n, -1}}^{\text{ = 0}}, b] + [[a, b], [a\underbrace{bb \ldots b}_{n - 5}]] =
\]
\[
= A_{n+1, 0} = [A_{n, 0}, b] + [[abb], [a\underbrace{bb \ldots b}_{n - 6}]] = [[abb], [a\underbrace{bb \ldots b}_{n - 6}]] =
\]
\[
= A_{n+1, 1} = [A_{n, 1}, b] + [[abbb],[a\underbrace{bb \ldots b}_{n - 7}]] = A_{n+1, 2}.
\]
Continuing in the same way, we find that $A_{n+1, i} = A_{n+1, j}$ for any $i, j$. If $n = 2m$ ($n$ is even), then we have
$$
A_{n+1, i} = A_{n+1, m-3} = [[a\underbrace{bb \ldots b}_{m - 2}],[a\underbrace{bb \ldots b}_{m - 2}]] = 0.
$$
If $n$ is odd, then we need to add a new relation
$$
A_{n+1, -1} = [a\underbrace{b\ldots b}_{n-4}a] = 0.
$$
\end{proof}
The following theorem describes the first three consecutive quotients of the lower central series of a right-angled Coxeter group $\racg_\sK$.

\begin{theorem}\label{LRCK}
Let $\sK$ be a simplicial complex on $[m]$, let $\racg_\sK$ be the right-angled Coxeter group corresponding to $\sK$, and $L(\racg_\sK)$ its associated Lie algebra. Then:
\begin{itemize}
\item[(a)] $L^1(\racg_\sK)$ has a basis $\overline{g}_1, \ldots, \overline{g}_m$;
\item[(b)] $L^2(\racg_\sK)$ has a basis consisting of the commutators $[\overline{g}_i, \overline{g}_j]$ with $i < j$ and ${\{i, j\} \notin \sK}$;
\item[(c)] $L^3(\racg_\sK)$ has a basis consisting of
\begin{itemize}
\item[--] the commutators $[\overline{g}_i, \overline{g}_j, \overline{g}_j]$ with $i < j$ and $\{i, j\} \notin \sK$;
\item[--] the commutators $[\overline{g}_i, \overline{g}_j, \overline{g}_k]$ where $i < j > k, i \neq k$ and $i$ is the smallest vertex in a connected component of $\sK_{\{i,j,k\}}$ not containing~$j$.
\end{itemize}
\end{itemize}
\end{theorem}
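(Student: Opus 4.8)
The plan is to prove parts (a)--(c) by combining the epimorphism $\phi \colon L_\sK \to L(\racg_\sK)$ from Proposition~\ref{epinmono} with a lower bound on the size of each graded component $L^k(\racg_\sK)$ coming from Corollary~\ref{h1rk}, which counts the minimal generators of $\racg'_\sK$ by the rank of $\bigoplus_J \widetilde H_0(\sK_J)$. The guiding idea is that the candidate commutators listed in (a)--(c) are the images under $\phi$ of a spanning set of the corresponding graded component of $L_\sK$, so it suffices to show (i) that they span $L^k(\racg_\sK)$ and (ii) that they are linearly independent over $\Z_2$; independence is where the counting from Corollary~\ref{h1rk} will do the work.

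First I would dispose of (a): since $\racg_\sK$ is generated by $g_1, \ldots, g_m$ with $g_i^2 = 1$, the abelianization $L^1(\racg_\sK) = \racg_\sK/\gamma_2(\racg_\sK)$ is $(\Z_2)^m$ with the stated basis. For (b), the relation $[\overline g_i,\overline g_j]=0$ for $\{i,j\}\in\sK$ together with antisymmetry (which over $\Z_2$ reads $[\overline g_i,\overline g_j]=[\overline g_j,\overline g_i]$) shows that the commutators $[\overline g_i,\overline g_j]$ with $i<j$ and $\{i,j\}\notin\sK$ span $L^2(\racg_\sK)$; to see they are independent I would observe that each such pair $\{i,j\}$ with $\{i,j\}\notin\sK$ contributes exactly one generator $(g_i,g_j)$ to the list in Theorem~\ref{gscox}, whose images form a basis of $\racg'_\sK/\racg''_\sK$ by Corollary~\ref{h1rk}. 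The key compatibility I would establish is that the length-$k$ part of the generating set of Theorem~\ref{gscox}, reduced modulo $\gamma_{k+1}(\racg_\sK)$, maps to the commutators in $L^k(\racg_\sK)$ listed in the theorem, so that the counts match for $k=2,3$.

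For (c) the spanning statement again follows by rewriting an arbitrary triple bracket $[\overline g_i,\overline g_j,\overline g_k]$: the Jacobi identity and the $\sK$-relations let me reduce to the two listed families, and the Hall--Witt identities of~\eqref{WH} (together with $g_i^2=1$) let me match the group-level generators of Theorem~\ref{gscox} of length $3$ to these Lie brackets. The first family $[\overline g_i,\overline g_j,\overline g_j]$, $i<j$, $\{i,j\}\notin\sK$, accounts for the repeated-index commutators (these exist precisely because $L(\racg_\sK)$ is a $\Z_2$-algebra and $[\mu_i,\mu_j,\mu_j]$ need not vanish), while the second family is exactly the length-$3$ part of~\eqref{commuset}, indexed by $i<j>k$ with $i$ the smallest vertex of its component in $\sK_{\{i,j,k\}}$ not containing $j$. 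Independence of the second family follows directly from Corollary~\ref{h1rk}, and independence of the whole collection reduces to checking that no nontrivial $\Z_2$-combination of the two families lies in $\gamma_4(\racg_\sK)$.

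The main obstacle I anticipate is precisely this last independence claim in (c), and more specifically controlling the interaction between the two families and the relations that $\phi$ imposes beyond those of $L_\sK$. Example~\ref{twoplracg} shows that $\phi$ has a genuine kernel already in degree $3$ (there $[\mu_1,\mu_2,\mu_1]=[\mu_1,\mu_2,\mu_2]$ in the image), so the naive basis of $L^3_\sK$ is \emph{not} a basis of $L^3(\racg_\sK)$, and I must verify that the identifications forced by $\phi$ are exactly accounted for by restricting to $i<j>k$ rather than all distinct triples. The cleanest way to close this gap is to produce an explicit additive homomorphism from the free $\Z_2$-module on the listed commutators onto $L^3(\racg_\sK)$ and check, via the Hall--Witt expansion~\eqref{WH} applied to length-$3$ commutators modulo $\gamma_4$, that it is inverse to $\phi$ on the span of these generators; the rank count from Corollary~\ref{h1rk} then certifies that no further collapse occurs.
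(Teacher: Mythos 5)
Your overall route coincides with the paper's: both rest on Theorem~\ref{gscox}/Corollary~\ref{h1rk} and on comparing the graded quotients $L^k$ with the free abelian group $\gamma_2(\racg_\sK)/\gamma_2'(\racg_\sK)$. But there is a genuine gap at exactly the step you flag, and the shortcuts you propose for it do not work. Corollary~\ref{h1rk} only describes the torsion-free group $\gamma_2/\gamma_2'\cong\Z^N$; a rank count of this group says nothing about the $\Z_2$-dimensions of $L^2=\gamma_2/\gamma_3$ and $L^3=\gamma_3/\gamma_4$, because these are the quotients of $\Z^N$ by $\gamma_3/\gamma_2'$ and of $\gamma_3/\gamma_2'$ by $\gamma_4/\gamma_2'$ (note $\gamma_2'\subset\gamma_4$), and both subgroups are of finite index, hence also of rank~$N$. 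What determines the answer is the precise embedding of these subgroups expressed in the basis of Theorem~\ref{gscox}: which combinations of basis elements, and which multiples of them, lie in $\gamma_3$, resp.~$\gamma_4$. Consequently, ``independence of the second family follows directly from Corollary~\ref{h1rk}'' and ``the rank count \dots certifies that no further collapse occurs'' are non sequiturs: independence in $\Z^N$ does not survive passage to the quotient mod $\gamma_4$ unless one has generators of $\gamma_4/\gamma_2'$ in hand. Your proposed fix --- an ``explicit additive homomorphism \dots inverse to $\phi$'' --- is a restatement of the independence claim, not a mechanism for proving it.

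The missing content is precisely what occupies most of the paper's proof. By Proposition~\ref{comb}, $\gamma_4/\gamma_2'$ is generated by images of simple nested commutators of length $\ge 4$, and the whole difficulty is to express those with \emph{repeated} indices in the basis of Theorem~\ref{gscox}. The paper does this by explicit Hall--Witt manipulations, identity (4.5) of \cite{pa-ve}, and one computer-assisted expansion, showing that each of the nine repeated-index types is congruent mod $\gamma_2'$ to the square of a length-$3$ basis commutator, e.g.\ $(g_i,g_j,g_k,g_k)\equiv(g_i,g_j,g_k)^{-2}$ and $(g_i,g_j,g_j,g_j)\equiv(g_j,g_i,g_j)^2 \mod \gamma_2'$; only after this does $\gamma_3/\gamma_4$ acquire the stated $\Z_2$-basis. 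A second, smaller gap: your spanning argument for (c) via ``Jacobi and the $\sK$-relations'' cannot succeed inside $L_\sK$, since the two listed families do not even span $L^3_\sK$ in general (for $\sK$ two disjoint points they give a single element, while $L^3_\sK$ is $2$-dimensional over $\Z_2$); the reduction genuinely needs group-level identities coming from $g_i^2=1$, such as $(g_i,g_j,g_i)=(g_i,g_j,g_j)$ --- that is, exactly the relations in the kernel of $\phi$ whose control your plan defers.
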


\begin{example}
Consider simplicial complexes on $3$ vertices.

Let
$\mathcal K=\begin{picture}(10,5)
\put(0,2){\circle*{1}}
\put(5,2){\circle*{1}}
\put(10,2){\circle*{1}}
\put(-0.5,-1){\scriptsize 1}
\put(4.5,-1){\scriptsize 2}
\put(9.5,-1){\scriptsize 3}
\end{picture}\,\,\,$.
Then $L^3(\racg_\sK)$ has a basis consisting of $5$ commutators: $[\overline{g}_1, \overline{g}_2, \overline{g}_2], [\overline{g}_2, \overline{g}_3, \overline{g}_3], [\overline{g}_1, \overline{g}_3, \overline{g}_3], [\overline{g}_1, \overline{g}_3, \overline{g}_2], [\overline{g}_2, \overline{g}_3, \overline{g}_1]$.

Let
$\mathcal K=\begin{picture}(10,5)
\put(0,2){\circle*{1}}
\put(5,2){\circle*{1}}
\put(10,2){\circle*{1}}
\put(0,2){\line(1,0){5}}
\put(-0.5,-1){\scriptsize 1}
\put(4.5,-1){\scriptsize 2}
\put(9.5,-1){\scriptsize 3}
\end{picture}\,\,\,$.
Then $L^3(\racg_\sK)$ has a basis consisting of $3$ commutators: $[\overline{g}_2, \overline{g}_3, \overline{g}_3], [\overline{g}_1, \overline{g}_3, \overline{g}_3], [\overline{g}_1, \overline{g}_3, \overline{g}_2]$.

Let
$\mathcal K=\begin{picture}(10,5)
\put(0,2){\circle*{1}}
\put(5,2){\circle*{1}}
\put(10,2){\circle*{1}}
\put(0,2){\line(1,0){5}}
\put(5,2){\line(1,0){5}}
\put(-0.5,-1){\scriptsize 1}
\put(4.5,-1){\scriptsize 2}
\put(9.5,-1){\scriptsize 3}
\end{picture}\,\,\,$.
Then $L^3(\racg_\sK)$ is generated by the commutator $[\overline{g}_1, \overline{g}_3, \overline{g}_3]$.
\end{example}

\begin{proof}[Proof of Theorem~\ref{LRCK}]
To simplify the notation we write $L^k$ instead of $L^k(\racg_\sK)$ and $\gamma_k$ instead of $\gamma_k(\racg_\sK)$.
Statement (a) follows from the fact that
$$
L^1 = \gamma_1 / \gamma_2 = \racg_\sK / \racg_\sK' = \mathbb Z_2^m
$$
with basis $\overline{g}_1, \ldots, \overline{g}_m$.

\smallskip
We prove statement (b). Consider the abelianization map
$$
\varphi_{\mathrm{ab}} : \racg_\sK' \rightarrow \racg_\sK' / \racg_\sK'' = \gamma_2 / \gamma_2'.
$$
The group $\racg_\sK' / \racg_\sK'' = H_1(\rk)$ is free abelian, see Corollary~\ref{h1rk}.

Consider $L^2 = \gamma_2 / \gamma_3$. The group $L^2$ is a $\mathbb Z_2$-module (see Proposition~\ref{kv}), i.\,e. $L^2 = \mathbb Z_2^M$ for some $M \in \mathbb N$. We have a sequence of nested normal subgroups
$$
\gamma_2' \lhd \gamma_4 \lhd \gamma_3 \lhd \gamma_2.
$$

Consider the exact sequence of abelian groups:
$$
\begin{array}{ccccccccc}
0 &\longrightarrow & \gamma_3 / \gamma_2' & \stackrel{\psi}{\longrightarrow} & \gamma_2 / \gamma_2' & \longrightarrow & \gamma_2 / \gamma_3 & \longrightarrow & 0.\\
&&\|& &\|& &\|\\
&&\mathbb Z^N& &\mathbb Z^N& &\mathbb Z_2^M
\end{array}
$$

Recall from Corollary~\ref{h1rk} that the free abelian group $\gamma_2 / \gamma_2' = \mathbb Z^N$  has a basis consisting of the images of the iterated commutators with all different indices described in Theorem~\ref{gscox}. The images of the commutators of length $\ge 3$ are contained in the subgroup $\gamma_3 / \gamma_2' \subset \gamma_2 / \gamma_2'$. The group $\gamma_3 / \gamma_2'$ also contains commutators of length $3$ with duplicate indices, i.\,e. of the form $(g_j, g_i, g_i) = (g_i, g_j)^2$. 
Therefore, the homomorphism $\psi$ acts by the formula:
\begin{gather*}
\psi(\overline{(g_{i},g_{j},g_{k_1},g_{k_2},\ldots,g_{k_{m-2}})}) = \overline{(g_{i},g_{j},g_{k_1},g_{k_2},\ldots,g_{k_{m-2}})},\quad m \geqslant 3,\\
\psi(\overline{(g_j, g_i, g_i)}) = \overline{(g_i, g_j)}^2,
\end{gather*}
where the indices $i, j, k_1, \ldots, k_{m-2}$ are all different. The elements $\overline{(g_j, g_i, g_i)}$ with $i < j$, $\{i, j\} \notin \sK$, and the elements $\overline{(g_{i},g_{j},g_{k_1},g_{k_2},\ldots,g_{k_{m-2}})}, m \geqslant 3$, with the condition on the indices from Theorem~\ref{gscox} form a basis in a free abelian group~$\gamma_3 / \gamma_2'$.

It follows that the $\mathbb Z_2$-module $L^2 = \gamma_2 / \gamma_3$ has a basis consisting of the elements $\overline{(g_i, g_j)} = [\overline{g}_i, \overline{g}_j]$ with $i < j$ and ${\{i, j\} \notin \sK}$, proving (b).

\smallskip
We prove statement (c).
Consider $L^3 = \gamma_3 / \gamma_4$. The group $L^3$ is a $\mathbb Z_2$-module (see Proposition~\ref{kv}), i.\,e. $L^3 = \mathbb Z_2^M$ for some $M \in \mathbb N$. 

Consider the exact sequence of abelian groups:
$$
\begin{array}{ccccccccc}
0 &\longrightarrow & \gamma_4 / \gamma_2' & \stackrel{\chi}{\longrightarrow} & \gamma_3 / \gamma_2' & \longrightarrow & \gamma_3 / \gamma_4 & \longrightarrow & 0.\\
&&\|& &\|& &\|\\
&&\mathbb Z^N& &\mathbb Z^N& &\mathbb Z_2^M
\end{array}
$$

For the free abelian group $\gamma_3 / \gamma_2'$, we will use the basis constructed in the proof of statement~(b). Elements of this basis corresponding to commutators of length~$\ge 4$ are contained in $\gamma_4/\gamma_2'$. The group $\gamma_4 / \gamma_2'$ also contains commutators of length $4$ with repeated indices. These commutators have one of the following nine types, which we divide into two types $A$ and $B$ for convenience:
\begin{gather*}
A = \{(g_i, g_j, g_j, g_j), (g_i, g_j, g_j, g_i), (g_i, g_j, g_i, g_j),\\
\hspace{0.08\linewidth}(g_i, g_j, g_i, g_i), (g_i, g_j, g_i, g_k),(g_i, g_j, g_j, g_k)\},\\
B = \{(g_i, g_j, g_k, g_j), (g_i, g_j, g_k, g_i), (g_i, g_j, g_k, g_k)\}.
\end{gather*}

Note that
\begin{multline*}
(g_i, g_j, g_j, g_j) = ((g_j, g_i) \cdot (g_j, g_i), g_j) = \\
= ((g_j, g_i), g_j)\cdot(((g_j, g_i), g_j), (g_j, g_i))\cdot((g_j, g_i), g_j) \equiv (g_j, g_i, g_j)^2 \mod \gamma_2',
\end{multline*}
because $(((g_j, g_i), g_j), (g_j, g_i))\in \gamma_2'$. Here in the second identity we used commutator identity~\eqref{WH}.
A similar decomposition holds for other commutators of type $A$, for example,
$$
(g_i, g_j, g_i, g_k) = (g_j, g_i, g_k)^2 \mod \gamma_2'.
$$

Now consider the commutators of type $B$. We will need the following commutator identities. For any $a, b, c, d \in \gamma_1$ we have:
\begin{equation}\label{comgam2}
(a,b)(c,d) \equiv (c,d)(a,b) \mod \gamma_2'.
\end{equation}
It follows that the last of the identities~\eqref{WH} takes the following form modulo $\gamma_2'$:
\begin{equation}\label{WHMod}
(a, b, c)(b, c, a)(c, a, b) \equiv 1 \mod \gamma_2'.
\end{equation}
Furthermore, the following identity was obtained in~\cite[identity (4.5)]{pa-ve}:
\begin{equation*}
  \!(g_q,(g_p,x))\!=\!(g_q,x)(x,(g_p,g_q))(g_q,g_p)(x,g_p)(g_p,(g_q,x))
  (x,g_q)(g_p,g_q)(g_p,x).
\end{equation*}
If $x \in \gamma_2$, then the previous identity and identity~\eqref{comgam2} imply
\begin{equation}\label{pswap}
(g_q,(g_p,x)) \equiv (g_p,(g_q,x)) \mod \gamma_2'.
\end{equation}
To simplify the notation, we write $i$ instead of $g_i$. From~\eqref{WH} and~\eqref{WHMod} we obtain
\begin{multline*}
(g_i, g_j, g_k, g_i) = (((i, j), k), i) \equiv ( (i, (i, j)), k)^{-1} \cdot ((k, i), (i, j))^{-1} \equiv \\
\equiv (k, (i, (i, j))) = (k, ((i, j), i)^{-1}) = (k, (j, i)^{-2}) = \\
= (k, (j, i)^{-1}) \cdot (k, (j, i)^{-1}) \cdot ((k, (i, j)^{-1}), (i, j)^{-1}) \equiv \\
\equiv (k, (j, i)^{-1})^2 = (g_i, g_j, g_k)^{-2} \mod \gamma_2',
\end{multline*}
\begin{multline*}
(g_i, g_j, g_k, g_j) = (((i, j), k), j) \equiv ( (j, (i, j)), k)^{-1} \cdot ((k, j), (i, j))^{-1} \equiv \\
\equiv (k, (j, (i, j))) = (k, ((i, j), j)^{-1}) = (k, (j, i)^{-2}) \equiv (g_i, g_j, g_k)^{-2} \mod \gamma_2',
\end{multline*}

The last commutator of type $B$ requires a lengthier calculation:
\begin{multline*}
(g_i, g_j, g_k, g_k) \equiv^{\text{1}} (j,i,k) \cdot (i,j,k) \cdot (k,i,k) \cdot (i,k,k) \cdot ((k,j)^i,k) \cdot ((j,k)^i,k) \cdot \\
\cdot ((i,k)^j,k) \cdot ((k,i)^j,k) \cdot (k, (j, (k, i)))^{-1} \cdot (k, (i, (j, k)))^{-1} \equiv^{\text{2}} \\
\equiv^{\text{2}} (k, (j, (k, i)))^{-1} \cdot (k, (i, (j, k)))^{-1} \equiv^{\text{3}} (j, (k, (k, i)))^{-1} \cdot (i, (k, (j, k)))^{-1} = \\
= (j, (i,k)^{-2})^{-1} \cdot (i, (k,j)^{-2})^{-1} \equiv (k, i, j)^2 \cdot (j, k, i)^2 \equiv (g_i, g_j, g_k)^{-2} \mod \gamma_2'.
\end{multline*}
Here is the identity $\equiv^1$ is obtained with help of the algorithm written by the author in Wolfram Mathematica using commutator identities~\eqref{WH}.

The identity $\equiv^2$ follows from the relations $(a,b) \cdot (a^{-1},b) = (b, a, a^{-1})$ and $(b, a, a^{-1}) \equiv 1 \mod \gamma_2'$, if $a \in \gamma_2$.

The identity $\equiv^3$ follows from~\eqref{pswap}.

It follows that the homomorphism $\chi \colon \gamma_4 / \gamma_2' \rightarrow \gamma_3 / \gamma_2'$ acts by the formula:
\begin{gather*}
\chi(\overline{(g_{i},g_{j},g_{k_1},g_{k_2},\ldots,g_{k_{m-2}})}) = \overline{(g_{i},g_{j},g_{k_1},g_{k_2},\ldots,g_{k_{m-2}})},\quad m \geqslant 4,\\
\chi(\overline{(g_j, g_i, g_i, g_j)}) = \overline{((g_i, g_j), g_j)}^2,\\
\chi(\overline{(g_j, g_i, g_j, g_k)}) = \overline{((g_i, g_j), g_k)}^2,\\
\chi(\overline{(g_i, g_j, g_k, g_k)}) = \overline{((g_i, g_j), g_k)}^{\;-2}.
\end{gather*}
where the indices corresponding to a different letters are different.
Thus, the $\mathbb Z_2$-module $L^3 = \gamma_3 / \gamma_4$ has a basis consisting of the elements specified in the theorem.
\end{proof}

As a consequence, we obtain a description of the first three consecutive quotients of the lower central series for a free product of the groups $\mathbb Z_2$.

\begin{corollary}\label{corsvz2}
Let $\sK$ be a set of $m$ disjoint points, i.\,e. $\racg_\sK = \mathbb Z_2\langle g_1 \rangle \ast \ldots \ast \mathbb Z_2\langle g_m \rangle$. Then:
\begin{itemize}
\item[(a)] $L^1(\racg_\sK)$ has a basis $\overline{g}_1, \ldots, \overline{g}_m$;
\item[(b)] $L^2(\racg_\sK)$ has a basis consisting of the commutators $[\overline{g}_i, \overline{g}_j]$ with $i < j$;
\item[(c)] $L^3(\racg_\sK)$ has a basis consisting of
\begin{itemize}
\item[--] the commutators $[\overline{g}_i, \overline{g}_j, \overline{g}_j]$ with $i < j$;
\item[--] the commutators $[\overline{g}_i, \overline{g}_j, \overline{g}_k]$ with $i < j > k$, $i \neq k$.
\end{itemize}
\end{itemize}
\end{corollary}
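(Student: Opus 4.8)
The plan is to deduce Corollary~\ref{corsvz2} directly from Theorem~\ref{LRCK} by specializing to the discrete simplicial complex. When $\sK$ consists of $m$ disjoint points it has no edges, so the relation $\{i,j\}\in\sK$ fails for every pair with $i\neq j$; equivalently, $\{i,j\}\notin\sK$ holds for all $i<j$. With this single observation in hand, each of the three parts should fall out of the corresponding part of Theorem~\ref{LRCK}, and the only real work is checking that the graph-theoretic side conditions degenerate correctly in the edgeless case.

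First I would record that part (a) is literally part (a) of Theorem~\ref{LRCK}: the basis $\overline{g}_1,\dots,\overline{g}_m$ of $L^1(\racg_\sK)=\racg_\sK/\racg_\sK'=\Z_2^m$ carries no combinatorial condition, so nothing changes. For part (b), Theorem~\ref{LRCK}(b) gives a basis of the $[\overline{g}_i,\overline{g}_j]$ with $i<j$ and $\{i,j\}\notin\sK$; since the latter condition is now vacuous, the basis is all $[\overline{g}_i,\overline{g}_j]$ with $i<j$, as claimed. The first family in part (c), the commutators $[\overline{g}_i,\overline{g}_j,\overline{g}_j]$ with $i<j$ and $\{i,j\}\notin\sK$, likewise loses its edge condition and becomes all pairs $i<j$.

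The one place that requires a moment's thought is the second family in part (c). Theorem~\ref{LRCK}(c) lists the commutators $[\overline{g}_i,\overline{g}_j,\overline{g}_k]$ subject to $i<j>k$, $i\neq k$, together with the requirement that $i$ be the smallest vertex in a connected component of the full subcomplex $\sK_{\{i,j,k\}}$ not containing $j$. Here I would observe that for discrete $\sK$ the subcomplex $\sK_{\{i,j,k\}}$ is three isolated points, so its connected components are the singletons $\{i\}$, $\{j\}$, $\{k\}$. The components not containing $j$ are exactly $\{i\}$ and $\{k\}$, and $i$ is trivially the smallest (indeed the only) vertex of $\{i\}$; thus the connectivity requirement collapses to the single condition $i\neq j$, which is already implied by $i<j$. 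Hence the surviving conditions are precisely $i<j>k$ and $i\neq k$, matching the statement. Combining the three cases recovers the asserted bases.

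There is no genuine obstacle: the content of the corollary is already contained in Theorem~\ref{LRCK}, and the argument reduces to verifying the degeneration of the connectivity condition above. If one preferred a self-contained treatment, one could instead rerun the exact-sequence computation from the proof of Theorem~\ref{LRCK} verbatim with $\sK$ discrete, noting that now no relation $[\mu_i,\mu_j]=0$ is imposed so that the maps $\psi$ and $\chi$ take the same form with all indices unconstrained; but this would merely reproduce the specialization just described.
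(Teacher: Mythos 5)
Your proposal is correct and matches the paper's approach: the paper derives this corollary directly from Theorem~\ref{LRCK} by specializing to the edgeless complex, with no additional argument given. Your careful check that the connectivity condition on $\sK_{\{i,j,k\}}$ degenerates to the vacuous condition (each component being a singleton, so $i$ is automatically the smallest vertex of its own component, which does not contain $j$) is exactly the verification implicit in the paper's one-line deduction.
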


\end{document}